\theoremstyle{plain}
\newcounter{LEM}
\newtheorem{lem}[LEM]{Lemma}
\theoremstyle{definition}
\newcounter{DEFN}
\newtheorem{defn}[DEFN]{Definition}
\theoremstyle{remark}
\newcounter{REM}
\newtheorem{rem}[REM]{Remark}
\begin{document}
	\title{\textbf{A Passivity-Based Method for \\ Accelerated Convex Optimisation}\footnote{\copyright 2024 IEEE. Personal use of this material is permitted. Permission from IEEE must be obtained for all other uses, in any current or future media, including reprinting/republishing this material for advertising or promotional purposes, creating new collective works, for resale or redistribution to servers or lists, or reuse of any copyrighted component of this work in other works}} 
	\author{Namhoon Cho \footnote{Research Fellow, Centre for Autonomous and Cyber-Physical Systems, School of Aerospace, Transport and Manufacturing, Cranfield University, Cranfield, Bedfordshire, MK43 0AL, United Kingdom. \texttt{n.cho@cranfield.ac.uk}}, 
		and Hyo-Sang Shin \footnote{Professor, Cho Chun Shik Graduate School of Mobility, Korea Advanced Institute of Science and Technology, Daejeon, 34051, South Korea. \texttt{hyosangshin@kaist.ac.kr}}
	}
	\date{}
	\maketitle
	
	\begin{abstract}
		This study presents a constructive methodology for designing accelerated convex optimisation algorithms in continuous-time domain. The two key enablers are the classical concept of passivity in control theory and the time-dependent change of variables that maps the output of the internal dynamic system to the optimisation variables. The Lyapunov function associated with the optimisation dynamics is obtained as a natural consequence of specifying the internal dynamics that drives the state evolution as a passive linear time-invariant system. The passivity-based methodology provides a general framework that has the flexibility to generate convex optimisation algorithms with the guarantee of different convergence rate bounds on the objective function value. The same principle applies to the design of online parameter update algorithms for adaptive control by re-defining the output of internal dynamics to allow for the feedback interconnection with tracking error dynamics.
	\end{abstract}
	
	
	\section{Introduction} \label{Sec:Intro}
	The analysis of accelerated optimisation algorithms in the continuous-time domain has been the area of active research in recent years. Well-known results include the analysis of Nesterov's accelerated gradient descent method provided in \cite{Su2016} by taking the limit of infinitesimal step-size. In relation to continuous-time optimisation as well as direct adaptive control, \cite{Gaudio2021} presented four different algorithms and studied their stability properties. The recent work of \cite{Suh2022} provided a much more generalised understanding towards various accelerated convex optimisation methods by highlighting the role of using a time-dilated coordinate in the analysis. However, the scope of \cite{Su2016,Gaudio2021,Suh2022} is focused on the second-order optimisation dynamics of specific forms.
	
	Meanwhile, there have been attempts to approach optimisation from robust control perspectives \cite{Hu2017a, Hu2017b, Lessard2022}. The control-theoretic concept of dissipativity was noticed to be central in the construction of associated Lyapunov function, which in turn enables rigorous convergence analysis. Although the role of dissipativity theory in the analysis and design of optimisation algorithms was illustrated very clearly, the scope of \cite{Hu2017a,Hu2017b,Lessard2022} alone falls short of providing a constructive procedure for achieving accelerated convergence in the design of a new optimisation algorithm.
	
	Motivated by the viewpoints provided by the previous works, we propose a constructive methodology for streamlined design and analysis of accelerated convex optimisation algorithms. The proposed approach combines the utility of the two design philosophies, namely, the coordinate transform of \cite{Suh2022} and the dissipativity viewpoint of \cite{Lessard2022}. Central to the new development is the idea that the passive input-output pair of a open-loop stable linear time-invariant dynamic system can be defined to be the gradient of an objective function and the vector of optimisation variables, respectively. Our new method characterises the structure of passive optimisation dynamics to a generalised extent while guaranteeing a prescribed order for the convergence rate bound on the objective function value.
	
	\section{Preliminaries} \label{Sec:Prelim}
	This section introduces several basic concepts as the mathematical preliminaries and the notation. 
	
	\subsection{Convex Optimisation}
	Consider a differentiable and convex function $f: \mathbb{R}^{n} \rightarrow \mathbb{R}$. The optimisation problem of our interest is described as
	\begin{equation} \label{Eq:prob}
		\underset{\theta\in\mathbb{R}^{n}}{\mathrm{minimise}} \quad f\left(\theta\right)
	\end{equation}
	If the solution of the problem in Eq. \eqref{Eq:prob} exists, let $\theta_{*} \in \mathbb{R}^{n}$ represents the optimal solution. Also, let $f_{*} \triangleq \inf\limits_{\theta\in\mathbb{R}^{n}} f\left(\theta\right)$ be the optimal value of the objective function.
	
	The Bregman divergence associated with $f$ for points $p, q \in \mathbb{R}^{n}$ is defined as
	\begin{equation} \label{Eq:Bregman_defn}
		D_{f}\left(p,q\right) \triangleq f\left(p\right)-f\left(q\right) - \left<\nabla f\left(q\right),p-q\right>
	\end{equation}
	where $\left< \cdot,\cdot \right>$ denotes the inner product. By definition of convexity, for a differentiable and convex function $f$, we have
	\begin{equation} \label{Eq:Bregman_Ieq}
		D_{f}\left(p,q\right) \geq 0
	\end{equation}
	for all $p, q \in \mathbb{R}^{n}$.
	
	\subsection{Stability and Passivity}
	Consider a linear time-invariant dynamic system whose state space realisation is given by
	\begin{equation} \label{Eq:sys_lti}
		\mathcal{G}:\qquad 
		\begin{aligned}
			\dot{x}\left(t\right) &= Ax\left(t\right) + Bu\left(t\right)\\
			y\left(t\right)&=Cx\left(t\right)+Du\left(t\right)
		\end{aligned}
	\end{equation}
	with constant matrices $A$, $B$, $C$, and $D$, where $x$ represents the state, $u$ represents the input, and $y$ represents the output. One may consider $\mathcal{G}\left[\cdot\right]$ as an operator that maps $u$ to $y$ as $y = \mathcal{G}\left[u\right]$.
	
	The following lemma is a classical result of linear system theory which relates the stability of a system that has no input-output with a Lyapunov equation.
	
	\begin{lem}[Lyapunov Equation, Theorem 3.7 in \cite{Khalil2015}] \label{Lem:Lyap}
		A matrix $A$ is Hurwitz if and only if for every positive definite symmetric matrix $Q$ there exists a positive definite symmetric matrix $P$ that satisfies the Lyapunov equation
		\begin{equation} \label{Eq:Lyap}
			A^{T}P + PA = -Q
		\end{equation}
		Moreover, if $A$ is Hurwitz, then $P$ is the unique solution.
	\end{lem}
	
	Passivity is a fundamental concept in classical control theory which formally describes the property of a system that does not generate energy internally but only stores and dissipates energy supplied from outside \cite{Kottenstette2014}. 
	
	\begin{defn}[Passivity, Definition 5.3 in \cite{Khalil2015}] \label{Defn:passivity}
		Consider a dynamic system described by
		\begin{equation} \label{Eq:sys_nl}
			\begin{aligned}
				\dot{x}\left(t\right) &= f\left(x\left(t\right), u\left(t\right)\right)\\
				y\left(t\right) &= h\left(x\left(t\right), u\left(t\right)\right)
			\end{aligned}
		\end{equation}
		The system in Eq. \eqref{Eq:sys_nl} is passive if there exists a continuously differentiable positive semidefinite storage function $V\left(x\right)$ such that
		\begin{equation} \label{Eq:passivity_IEq}
			u^{T}y \geq \dot{V} = \left<\nabla_{x}V, f\left(x,u\right)\right>\qquad \forall \left(x,u\right)
		\end{equation}
		Moreover, it is strictly passive if $u^{T}y \geq \dot{V} + \phi\left(x\right)$ for some positive definite function $\phi$.
	\end{defn}
	
	The following lemma is another classical result of linear system theory which relates the passivity of a system that has input-output with a set of algebraic equations extending Eq. \eqref{Eq:Lyap}.
	
	\begin{lem}[Kalman-Yakubovich-Popov, Lemma 5.3 and 5.4 in \cite{Khalil2015}] \label{Lem:KYP} 
		Let $G\left(s\right) = C\left(sI-A\right)^{-1}B+D$ be a square transfer function matrix, where $\left(A,B\right)$ is controllable and $\left(A,C\right)$ is observable. Then, $G\left(s\right)$ is (strictly) positive real if and only if there exist matrices $P=P^{T}>0$, $L$, and $W$, (and a positive constant $\epsilon$) such that
		\begin{align}
			PA+A^{T}P&=-L^{T}L \left( -\epsilon P \right) \label{Eq:KYP_A}\\
			PB&=C^{T}-L^{T}W \label{Eq:KYP_BC}\\
			W^{T}W&=D+D^{T} \label{Eq:KYP_D}
		\end{align}
		Moreover, the system with minimal realisation $\left(A,B,C,D\right)$ is (strictly) passive if $G\left(s\right)$ is (strictly) positive real. The storage function certifying passivity of the system is given by 
		\begin{equation} \label{Eq:V_storage}
			V = \frac{1}{2}x^{T}Px
		\end{equation}
	\end{lem}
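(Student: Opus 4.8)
The plan is to establish the stated biconditional together with the passivity conclusion through three linked implications: (i) solvability of Eqs. \eqref{Eq:KYP_A}--\eqref{Eq:KYP_D} implies passivity with storage function $V = \frac{1}{2}x^{T}Px$; (ii) passivity implies positive realness of $G(s)$; and (iii) positive realness implies solvability of Eqs. \eqref{Eq:KYP_A}--\eqref{Eq:KYP_D}. Implications (i) and (ii) are the elementary constructive directions, whereas (iii) is the substantive frequency-domain result and the technical heart of the lemma.

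For (i), I would take $V = \frac{1}{2}x^{T}Px$ with $P = P^{T} > 0$ and differentiate along the trajectories of $\mathcal{G}$ in Eq. \eqref{Eq:sys_lti}, obtaining
\begin{equation*}
	\dot{V} = \tfrac{1}{2}x^{T}\left(A^{T}P + PA\right)x + u^{T}B^{T}Px .
\end{equation*}
Substituting the three algebraic relations — $A^{T}P + PA = -L^{T}L$, $PB = C^{T} - L^{T}W$ so that $B^{T}P = C - W^{T}L$, and $D + D^{T} = W^{T}W$ — into $u^{T}y - \dot{V}$, and using that a scalar equals its own transpose so that $u^{T}Du = \frac{1}{2}u^{T}\left(D+D^{T}\right)u$, all terms collapse into a single perfect square
\begin{equation*}
	u^{T}y - \dot{V} = \tfrac{1}{2}\left(Lx + Wu\right)^{T}\left(Lx + Wu\right) \geq 0 ,
\end{equation*}
which is precisely the passivity inequality of Definition \ref{Defn:passivity}. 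In the strict case the right-hand side of Eq. \eqref{Eq:KYP_A} becomes $-L^{T}L - \epsilon P$, producing an additional term $\frac{\epsilon}{2}x^{T}Px = \epsilon V$, so that $u^{T}y \geq \dot{V} + \phi(x)$ with $\phi(x) = \epsilon V$ positive definite, giving strict passivity.

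Implication (ii) follows by integrating the dissipation inequality from the zero initial state, giving $\int_{0}^{T} u^{T}y\,dt \geq V(T) - V(0) \geq 0$; Parseval's theorem then transfers this nonnegativity to the frequency domain as $G(j\omega) + G(j\omega)^{*} \geq 0$ for all real $\omega$, i.e. $G(s)$ is positive real, and the strict variant follows analogously with the margin supplied by $\phi$.

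The hard part is implication (iii). Assuming $G(s)$ is positive real, I would form the para-Hermitian spectral density $\Phi(s) = G(s) + G^{T}(-s)$, which is nonnegative on the imaginary axis, and invoke the matrix spectral-factorisation theorem to write $\Phi(s) = M^{T}(-s)M(s)$ with a stable factor $M(s) = W + L(sI - A)^{-1}B$ sharing the realisation's $A$. Matching the state-space realisation of $\Phi$ against this factored form, while using controllability of $(A,B)$ and observability of $(A,C)$ to secure the required uniqueness and rank conditions, forces exactly the three identities in Eqs. \eqref{Eq:KYP_A}--\eqref{Eq:KYP_D}, with $P = P^{T} > 0$ emerging as the stabilising solution of the associated algebraic Riccati equation. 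The main obstacle is precisely this factorisation-plus-Riccati step: establishing existence of the stable spectral factor and of a symmetric positive-definite $P$ solving the Riccati inequality is where minimality of $(A,B,C,D)$ is indispensable. As this is a classical result, I would either carry the spectral-factorisation argument through in full or defer to the standard treatment cited alongside the statement.
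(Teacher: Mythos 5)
Your proposal is correct, and its step (i) is exactly the paper's proof: the paper's entire argument consists of the direct-substitution computation in Eq. \eqref{Eq:uy-V_dot}, where Eqs. \eqref{Eq:KYP_A}--\eqref{Eq:KYP_D} collapse $u^{T}y - \dot{V}$ into the perfect square $\frac{1}{2}\left(Lx+Wu\right)^{T}\left(Lx+Wu\right)$ plus the margin $\frac{1}{2}\epsilon x^{T}Px \geq \epsilon V$ in the strict case. Where you differ is in scope: the paper proves only the implication ``KYP equations $\Rightarrow$ (strict) passivity with storage function $V=\frac{1}{2}x^{T}Px$'' and treats the frequency-domain equivalence (positive realness $\Leftrightarrow$ solvability of the algebraic equations) purely as a citation to \cite{Khalil2015}, whereas you additionally sketch that equivalence via integration-plus-Parseval for passivity $\Rightarrow$ positive realness, and via spectral factorisation of $G\left(s\right)+G^{T}\left(-s\right)$ for the converse. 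Your extra steps follow the classical Anderson--Youla route and would make the lemma self-contained, but as written they remain sketches: step (ii) needs analyticity of $G$ in the open right half-plane (hence a stability argument) and care with imaginary-axis poles before the Parseval identity applies, and step (iii) --- existence of a stable spectral factor sharing the realisation's $A$-matrix, and positive definiteness of the resulting $P$ --- is precisely the content you defer to the literature. Since the lemma is itself quoted from \cite{Khalil2015}, that deferral is legitimate, and the only part actually used downstream in the paper (the dissipation inequality \eqref{Eq:V_dot} with the storage function \eqref{Eq:V_storage}) is the part that both you and the paper prove in full.
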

	
	\begin{proof}
		For $V$ given by Eq. \eqref{Eq:V_storage}, straightforward substitution of Eqs. \eqref{Eq:KYP_A}-\eqref{Eq:KYP_D} verifies
		\begin{equation} \label{Eq:uy-V_dot}
			\begin{aligned}
				u^{T}y - \dot{V} &= u^{T}\left(Cx+Du\right) - x^{T}P\left(Ax+Bu\right)\\
				&= \frac{1}{2}\left(Lx+Wu\right)^{T}\left(Lx+Wu\right)+\frac{1}{2}\epsilon x^{T}Px \\
				&\geq \epsilon V
			\end{aligned}
		\end{equation}
	\end{proof}
	
	Readers may refer to \cite{Madeira2016} for more detailed discussion on the equivalence between the frequency-domain notion of strict positive realness and the time-domain notion of strict passivity for linear dynamic systems.
	
	\subsection{Partial Derivative of Time-Dependent Function}
	Motivated by \cite{Suh2022}, consider a scalar-valued function $U\left(v\left(t\right),t\right)$ where vector-valued $v\left(t\right)$ is differentiable with respect to $t$. The chain rule states that
	\begin{equation} \label{Eq:chain_rule_diff}
		\frac{d}{dt}U\left(v\left(t\right),t\right) = \left<\nabla_{v}U\left(v,t\right), \dot{v}\left(t\right)\right> + \frac{\partial}{\partial t}U\left(v,t\right)
	\end{equation}
	Integrating Eq. \eqref{Eq:chain_rule_diff} from $t_{0}$ to $t$ gives
	\begin{equation} \label{Eq:chain_rule_int}
			\int_{t_{0}}^{t}\left<\nabla_{v}U\left(v,\tau\right), \dot{v}\left(\tau\right)\right>d\tau = U\left(v\left(t\right),t\right) - U\left(v\left(t_{0}\right),t_{0}\right) - \int_{t_{0}}^{t}\frac{\partial}{\partial \tau}U\left(v,\tau\right)d\tau
	\end{equation}

	\section{Passivity-Based Design of Optimisers} \label{Sec:KYP_Opt_design}
	\subsection{Generic Method}
	Suppose that $U\left(v\left(t\right), t\right): \mathbb{R}^{n}\times \mathbb{R} \rightarrow \mathbb{R}$ is a lower-bounded function (which will be specified later) whose value is nonnegative for every $t$ and defined with respect to some variable $v\left(t\right)\in \mathbb{R}^{n}$ (which will also be specified later). Let us introduce a variable $w\left(t\right) \in \mathbb{R}^{n}$ where $n= \dim\left(\theta\right)$ which evolves over time according to
	\begin{equation} \label{Eq:w_sys}
			w^{\left(m\right)}\left(t\right) + a_{m-1}w^{\left(m-1\right)}\left(t\right) +\cdots  +  a_{1}\dot{w}\left(t\right)+a_{0}w\left(t\right)= -\alpha\left(t\right)\nabla_{v} U\left(v\left(t\right),t\right)
	\end{equation}
	where $\left\{a_{i}\right\}_{i=0}^{n-1}$ is a set of constant coefficients, $\alpha\left(t\right)\neq 0$ for $\forall t\geq t_{0}$, and $w^{\left(i\right)}$ denotes the $i$-th time-derivative of $w$. 
	
	Let us refer to the system in Eq. \eqref{Eq:w_sys} as the generator dynamics $\mathcal{G}$ for which state and input are defined as
	\begin{equation} \label{Eq:xu}
		\begin{aligned}
			x &= \begin{bmatrix}
				w\\
				\dot{w} \\
				\ddot{w} \\
				\vdots \\
				w^{\left(m-1\right)}
			\end{bmatrix} \in \mathbb{R}^{mn}\\
			u &= -\alpha\nabla_{v} U\left(v,t\right) \in \mathbb{R}^{n}
		\end{aligned}
	\end{equation}
	Then, the ordinary differential equation in Eq. \eqref{Eq:w_sys} can be rewritten as a linear system of the form in Eq. \eqref{Eq:sys_lti} with the controllable canonical form realisation given by
	\begin{equation} \label{Eq:CCF}
				A = \begin{bmatrix}
					0 & 1 & 0 & \cdots & 0\\
					0 & 0 & 1 & \cdots & 0\\
					\vdots & \vdots & \vdots & \ddots & \vdots\\
					0 & 0 & 0 & \cdots & 1\\
					-a_{0} & -a_{1} & -a_{2} & \cdots & -a_{m-1}\\
				\end{bmatrix}
				\otimes I_{n}, \qquad
				B = \begin{bmatrix}
					0\\
					0\\
					\vdots\\
					0\\
					1
				\end{bmatrix}\otimes I_{n}
	\end{equation}
	where $\otimes$ represents the Kronecker product. 
	
	Suppose that $A$ is Hurwitz by appropriate choice of $\left\{a_{i}\right\}_{i=0}^{n-1}$ and $Q$ is an arbitrarily chosen symmetric positive definite matrix. Given strictly stable $A$ and $B$ as defined in Eq. \eqref{Eq:CCF}, Lemma \ref{Lem:Lyap} states that there exists symmetric positive definite $P$ satisfying Eq. \eqref{Eq:Lyap}. As noted in \cite{Slotine1991}, given the definition of input $u$, Lemma \ref{Lem:KYP} implies that the arbitrariness of $Q$ carries over to the possibility of constructing an infinite number of passive systems by compatible choices of outputs. Let the output has no direct feedthrough term, i.e., $D=0$. Then, Lemma \ref{Lem:KYP} verifies that the compatible output can be obtained by setting
	\begin{equation} \label{Eq:C_KYP}
		C=B^{T}P
	\end{equation}
	In this setup, the time-derivative of the storage function $V=\frac{1}{2}x^{T}Px$ can be written as
	\begin{equation} \label{Eq:V_dot}
		\begin{aligned}
			\dot{V} &= x^{T}P\left(Ax+Bu\right) = x^{T}PBu -\frac{1}{2}x^{T}Qx\\
			&= \left<y,u\right> -\frac{1}{2}x^{T}Qx
		\end{aligned}	
	\end{equation}
	Equivalently, if one defines 
	\begin{equation} \label{Eq:W_defn}
		W\left(t\right) \triangleq V\left(t\right) - \int_{t_{0}}^{t}\left<y\left(\tau\right),u\left(\tau\right)\right>d\tau
	\end{equation}
	then Eq. \eqref{Eq:V_dot} verifies that 
	\begin{equation} \label{Eq:W_dot}
		\begin{aligned}
		\dot{W} &= \dot{V} - \left<y,u\right> = -\frac{1}{2}x^{T}Qx \\ 
		&\leq 0
		\end{aligned}
	\end{equation}
	as per the definition of passivity stated in Definition \ref{Defn:passivity}. (We will come back to Eq. \eqref{Eq:W_dot} after defining the input-output map.)
	
	Integrating Eq. \eqref{Eq:V_dot} gives
	\begin{equation} \label{Eq:V_t_0}
			V\left(t\right) - V\left(t_{0}\right) 
			= \int_{t_{0}}^{t}\left<y\left(\tau\right),u\left(\tau\right)\right>d\tau - \frac{1}{2}\int_{t_{0}}^{t}x^{T}\left(\tau\right)Qx\left(\tau\right)d\tau
	\end{equation}
	Considering the way that the input $u$ is defined in Eq. \eqref{Eq:xu} and the integral relation shown in Eq. \eqref{Eq:chain_rule_int}, a natural choice is to define the variable $v\left(t\right)$ as the integral of scaled output, that is, 
	\begin{equation} \label{Eq:v_dot}
		\dot{v}\left(t\right) = \alpha\left(t\right)y\left(t\right) = \alpha\left(t\right)Cx\left(t\right)
	\end{equation}
	With this choice, Eq. \eqref{Eq:V_t_0} can be rewritten by using Eq. \eqref{Eq:chain_rule_int} as
	\begin{equation} \label{Eq:V_t_0_y}
			V\left(t\right) - V\left(t_{0}\right) = -U\left(v\left(t\right), t\right) + U\left(v\left(t_{0}\right), t_{0}\right)
			+ \int_{t_{0}}^{t} \frac{\partial}{\partial \tau}U\left(v,\tau\right)d\tau - \frac{1}{2}\int_{t_{0}}^{t}x^{T}\left(\tau\right)Qx\left(\tau\right)d\tau
	\end{equation}
	Note that if an invertible preconditioner matrix $M$ is introduced in the definition of input as $u=-\alpha M \nabla_{v}U$, then the inner product between input and output does not change by defining $\dot{v}=\alpha M^{-1} y$.
	
	We are now at the stage to relate i) the system variable $v\left(t\right)$ to the optimisation variable $\theta$ and ii) the auxiliary function $U$ to the objective function $f$. The purpose is to associate the construction of the passive dynamic system with the design of an optimisation algorithm to solve the problem in Eq. \eqref{Eq:prob}. In a similar manner to \cite{Suh2022}, let us introduce a coordinate transform given by
	\begin{equation} \label{Eq:theta_v}
		v\left(t\right)=\gamma\left(t\right)\left(\theta\left(t\right) - \theta_{c}\right)
	\end{equation}
	with a strictly monotonically increasing positive-valued function $\gamma\left(t\right) \in \mathbb{R}_{>0}$, i.e., $\dot{\gamma}\left(t\right)>0$ for $\forall t \geq t_{0}$, and some constant $\theta_{c}\in\mathbb{R}^{n}$. Since $\theta\left(v\left(t\right),t\right) = \gamma^{-1}\left(t\right)v\left(t\right) + \theta_{c}$, we have
	\begin{equation} \label{Eq:part_theta_t}
		\begin{aligned}
			\frac{\partial \theta\left(v,t\right)}{\partial t} &= -\gamma^{-2}\left(t\right)\dot{\gamma}\left(t\right)v\left(t\right) \\
			&= -\gamma^{-1}\left(t\right)\dot{\gamma}\left(t\right)\left(\theta\left(v,t\right)-\theta_{c}\right)
		\end{aligned}
	\end{equation}
	In anticipation of the Bregman divergence $D_{f}\left(\cdot,\cdot\right)$ appearing in the subsequent derivation, let us also define the conjugate function $U$ as 
	\begin{equation} \label{Eq:U_f}
			U\left(v\left(t\right),t\right) 
			= \gamma\left(t\right) \left[f\left(\theta\left(v\left(t\right),t\right)\right) - f\left(\theta_{c}\right)\right] + \psi\left(v\left(t\right),t\right)
	\end{equation}
	with a function $\psi\left(v\left(t\right),t\right)$ which satisfies $\psi\left(v\left(t\right),t\right)\geq \underline{\psi}$ for some constant $\underline{\psi}$ and $\frac{\partial}{\partial t}\psi\left(v\left(t\right),t\right) \leq 0$ for $\forall t\geq t_{0}$, and the choice of $\theta_{c}$ compatible with the nonnegativity of $f\left(\theta\left(v\left(t\right),t\right)\right) - f\left(\theta_{c}\right)$ such as $\theta_{c}=\theta_{*}$. Then, using Eq. \eqref{Eq:part_theta_t} in the partial differentiation of Eq. \eqref{Eq:U_f} with respect to $t$ yields
	\begin{equation} \label{Eq:part_U_t}
		\begin{aligned}
			\frac{\partial}{\partial t}U\left(v\left(t\right),t\right) &= \dot{\gamma}\left(t\right)\left[f\left(\theta\left(v\left(t\right),t\right)\right) - f\left(\theta_{c}\right)\right] +\gamma\left(t\right)\left<\nabla_{\theta}f\left(\theta\right),\frac{\partial \theta\left(v,t\right)}{\partial t}\right> + \frac{\partial}{\partial t}\psi\left(v\left(t\right),t\right)\\
			&=-\dot{\gamma}\left(t\right) D_{f}\left(\theta_{c},\theta\left(v,t\right)\right) + \frac{\partial}{\partial t}\psi\left(v\left(t\right),t\right) \\
			&\leq 0
		\end{aligned}	
	\end{equation}
	Note that the gradient of $U$ in Eq. \eqref{Eq:U_f} with respect to $v$ can be written as
	\begin{equation} \label{Eq:grad_U}
		\begin{aligned}
			\nabla_{v}U\left(v\left(t\right),t\right) &= \gamma\left(t\right)\nabla_{v}f\left(\gamma^{-1}\left(t\right)v\left(t\right)+\theta_{c}\right) + \nabla_{v}\psi\left(v,t\right)\\
			&= \nabla_{\theta}f\left(\theta\left(v\left(t\right),t\right)\right) + \nabla_{v}\psi\left(v,t\right)
		\end{aligned}
	\end{equation}
	
	Substituting Eq. \eqref{Eq:part_U_t} back into Eq. \eqref{Eq:V_t_0_y} and rearranging the equation gives a conservation relation as
	\begin{equation} \label{Eq:consv}
		\begin{aligned}
			E &\equiv U\left(v\left(t_{0}\right),t_{0}\right) + V\left(t_{0}\right)\\
			&= U\left(v\left(t\right),t\right) + V\left(t\right)  + \int_{t_{0}}^{t}\dot{\gamma}\left(\tau\right)D_{f}\left(\theta_{c},\theta\left(v,\tau\right)\right)d\tau  -\int_{t_{0}}^{t} \frac{\partial}{\partial \tau}\psi\left(v,\tau\right)d\tau + \frac{1}{2}\int_{t_{0}}^{t}x^{T}\left(\tau\right)Qx\left(\tau\right)d\tau\\
			&=\gamma\left(t\right) \left[f\left(\theta\left(v\left(t\right),t\right)\right) - f\left(\theta_{c}\right)\right] + \psi\left(v\left(t\right),t\right)  + V\left(t\right) + \int_{t_{0}}^{t}\dot{\gamma}\left(\tau\right)D_{f}\left(\theta_{c},\theta\left(v,\tau\right)\right)d\tau  -\int_{t_{0}}^{t} \frac{\partial}{\partial \tau}\psi\left(v,\tau\right)d\tau\\
			&\quad + \frac{1}{2}\int_{t_{0}}^{t}x^{T}\left(\tau\right)Qx\left(\tau\right)d\tau
		\end{aligned}
	\end{equation}
	Since $P>0$ so $V\left(t\right)\geq 0$, $Q>0$, $\psi\left(v\left(t\right),t\right)\geq \underline{\psi}$ and $\frac{\partial}{\partial t}\psi\left(v\left(t\right),t\right) \leq 0$ by definition, $\dot{\gamma}\left(t\right)>0$ by definition, and $D_{f}\left(\theta_{c},\theta\right)\geq 0$ as discussed in Eq. \eqref{Eq:Bregman_Ieq}, the conservation law of Eq. \eqref{Eq:consv} indicates that
	\begin{equation} \label{Eq:consv_Ieq}
		E \geq \gamma\left(t\right) \left[f\left(\theta\left(v\left(t\right),t\right)\right) - f\left(\theta_{c}\right)\right] + \underline{\psi}
	\end{equation}
	As $\gamma\left(t\right)>0$ for $\forall t \geq t_{0}$ by definition, Eq. \eqref{Eq:consv_Ieq} ensures the convergence rate bound for the objective function value as
	\begin{equation} \label{Eq:conv_rate}
			f\left(\theta\left(t\right)\right) - f\left(\theta_{c}\right) \leq \frac{E-\underline{\psi}}{\gamma\left(t\right)}
			=\frac{\gamma_{0}\left[f\left(\theta_{0}\right) - f\left(\theta_{c}\right)\right] + \psi_{0}-\underline{\psi}+\frac{1}{2}x_{0}^{T}Px_{0}}{\gamma\left(t\right)}
	\end{equation}
	where $\theta\left(t_{0}\right)=\theta_{0}$, $x\left(t_{0}\right)=x_{0}$, $\psi\left(v\left(t_{0}\right),t_{0}\right)=\psi_{0}$, and $\gamma\left(t_{0}\right)=\gamma_{0}$. One may also establish that 
	\begin{equation} \label{Eq:consv_Ieq_tilde_theta}
		E \geq \psi\left(v\left(t\right),t\right) = \psi\left(\gamma\left(t\right)\tilde{\theta}\left(t\right),t\right)
	\end{equation}
	where $\tilde{\theta}\left(t\right) \triangleq \theta\left(t\right) - \theta_{c}$. Note that Eq. \eqref{Eq:consv_Ieq_tilde_theta} can be utilised to certify the boundedness of $\tilde{\theta}\left(t\right)$ with appropriate choice of the function $\psi$. For example, let $\psi\left(v,t\right)=\psi\left(\gamma \tilde{\theta},t\right)=R\left(t\right)\left\|\tilde{\theta}\right\|_{p}^{q}$ where $R\left(t\right)>0$, $\dot{R}\left(t\right)\leq 0$, $p\geq 1$, and $q >0$. Then, considering the absolute homogeneity of a norm and the fact that $\gamma\left(t\right)>0$ by definition, Eq. \eqref{Eq:consv_Ieq_tilde_theta} ensures the convergence rate bound for the optimisation variable as
	\begin{equation} \label{Eq:conv_rate_tilde_theta}
		\left\|\tilde{\theta}\left(t\right)	\right\|_{p} \leq \frac{1}{\gamma\left(t\right)}\left(\frac{E}{R\left(t\right)}\right)^{\frac{1}{q}}
	\end{equation}
	
	It is impossible to obtain the actual optimisation variable $\theta$ directly from the relation in Eq. \eqref{Eq:theta_v}. For implementation of the process, we instead opt to the propagation of the following dynamics from a given initial guess $\theta_{0}$.
	\begin{equation} \label{Eq:theta_dot}
		\begin{aligned}
			\dot{\theta}\left(t\right) &= \frac{d}{dt}\theta\left(v\left(t\right),t\right) = \frac{d}{dt}\left(\gamma^{-1}\left(t\right)v\left(t\right)\right)\\
			&=-\gamma^{-2}\left(t\right)\dot{\gamma}\left(t\right)v\left(t\right) + \gamma^{-1}\left(t\right)\alpha\left(t\right)y\left(t\right)
		\end{aligned}
	\end{equation} 
	As a summary, Fig. \ref{Fig_BlkDiag} shows the block diagram representation of the passivity-based convex optimisation algorithm.
	\begin{figure}[hbt]
		\begin{center}
			\includegraphics[width=0.7\textwidth]{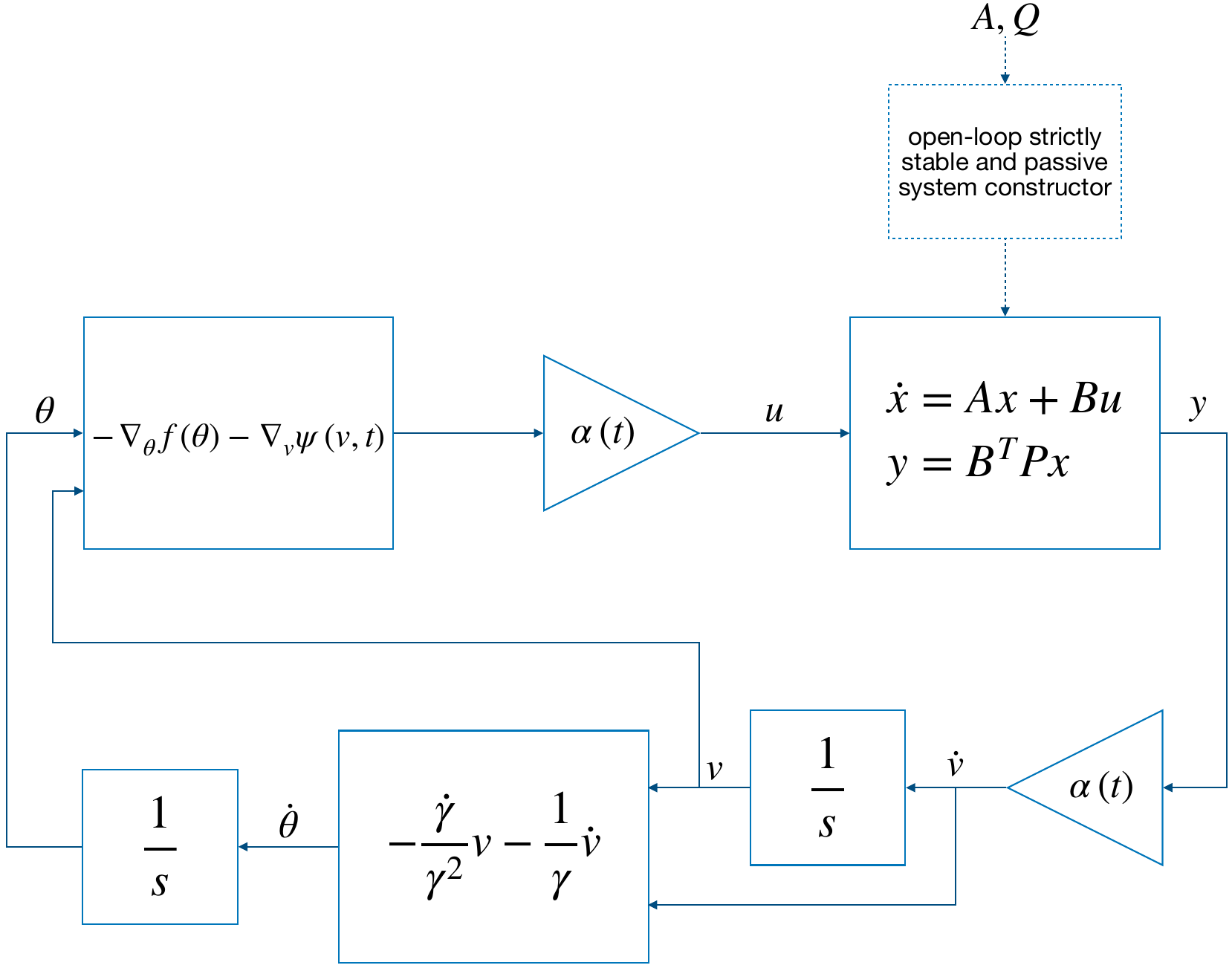}
			\caption{Block Diagram of Passivity-Based Convex Optimiser}
			\label{Fig_BlkDiag}
		\end{center}
	\end{figure}
	
	\subsection{Particular Case: $m=1$}
	Consider the particular case of $m=1$. In this case, the definitions of the variables reduce to $x=w$, $A=-a_{0}I$ for some $a_{0}>0$, $B=I$, $C=B^{T}P=P=\frac{Q}{2a_{0}}>0$, $y=Px$, and $u=-\alpha\left(\nabla_{\theta}f\left(\theta\right)+\nabla_{v}\psi\left(v,t\right)\right)$. 
	As a result, the optimisation dynamics can be written as
	\begin{align}
		\dot{x}\left(t\right) &= -a_{0}x\left(t\right) -\alpha\left(t\right)\left(\nabla_{\theta}f\left(\theta \right)+\nabla_{v}\psi\left(v,t\right)\right) \label{Eq:x_dot_m1}\\
		\dot{v}\left(t\right)&=\alpha\left(t\right)Px\left(t\right) \label{Eq:v_dot_m1}\\
		\dot{\theta}\left(t\right) &= -\gamma^{-2}\left(t\right)\dot{\gamma}\left(t\right)v\left(t\right) + \gamma^{-1}\left(t\right)\alpha\left(t\right)Px\left(t\right) \label{Eq:theta_dot_m1}
	\end{align}
	
	From Eq. \eqref{Eq:v_dot_m1}, we have
	\begin{equation} \label{Eq:x_v_dot}
		\begin{aligned}
			x &= \alpha^{-1}P^{-1}\dot{v}\\
			\dot{x} &= -\alpha^{-2}\dot{\alpha}P^{-1}\dot{v} + \alpha^{-1}P^{-1}\ddot{v}
		\end{aligned}
	\end{equation}
	Also, from Eq. \eqref{Eq:theta_v}, we have
	\begin{equation} \label{Eq:theta_v_dot}
		\begin{aligned}
			v&=\gamma \tilde{\theta}\\
			\dot{v} &= \dot{\gamma}\tilde{\theta} + \gamma\dot{\theta}\\
			\ddot{v} &= \ddot{\gamma}\tilde{\theta} + 2\dot{\gamma}\dot{\theta}+\gamma\ddot{\theta}
		\end{aligned}
	\end{equation}
	Substituting Eq. \eqref{Eq:x_v_dot} into Eq. \eqref{Eq:x_dot_m1} and rearranging the result leads to
	\begin{equation} \label{Eq:v_ddot}
			\ddot{v} +\left(a_{0}- \frac{\dot{\alpha}}{\alpha} \right)\dot{v}
			+ P\alpha^{2}\left(\nabla_{\theta}f\left(\theta \right)+\nabla_{v}\psi\left(v,t\right)\right) =0
	\end{equation}
	Again, substituting Eq. \eqref{Eq:theta_v_dot} into Eq. \eqref{Eq:v_ddot} and rearranging the result gives
	\begin{equation} \label{Eq:theta_ddot}
			\ddot{\theta} + \left(\frac{2\dot{\gamma}}{\gamma}+a_{0}-\frac{\dot{\alpha}}{\alpha}\right)\dot{\theta} + \left(\frac{\ddot{\gamma}}{\gamma}+\left(a_{0}-\frac{\dot{\alpha}}{\alpha}\right)\frac{\dot{\gamma}}{\gamma}\right)\tilde{\theta}
			 +\frac{P\alpha^{2}}{\gamma}\left(\nabla_{\theta}f\left(\theta \right)+\nabla_{v}\psi\left(v,t\right)\right) =0
	\end{equation}
	The equation above clearly shows that the optimisation dynamics for this particular case is a second-order dynamics in $\theta$ which can still capture many existing algorithms through the choice of $a_{0}$, $\gamma\left(t\right)$, $\alpha\left(t\right)$, and $\psi\left(v\left(t\right),t\right)$.
	
	\subsubsection{Example 1}
	If we take $\psi\left(v\left(t\right),t\right) = \frac{1}{2}\frac{R\left(t\right)}{\gamma^{2}\left(t\right)}\left\|v\left(t\right)\right\|_{2}^{2} + \delta$ for a function $R\left(t\right)$ such that $\dot{R}\left(t\right)\leq 0$ and some constant $\delta$, then Eq. \eqref{Eq:theta_ddot} turns into
	\begin{equation} \label{Eq:theta_ddot_psi_Rv}
		\begin{aligned}		
			\ddot{\theta} + \left(\frac{2\dot{\gamma}}{\gamma}+a_{0}-\frac{\dot{\alpha}}{\alpha}\right)\dot{\theta} 
			+\left(\frac{\ddot{\gamma}}{\gamma}+\left(a_{0}-\frac{\dot{\alpha}}{\alpha}\right)\frac{\dot{\gamma}}{\gamma}+\frac{P\alpha^{2}R}{\gamma^{2}}\right)\tilde{\theta} 
			+\frac{P\alpha^{2}}{\gamma}\nabla_{\theta}f\left(\theta \right) =0
		\end{aligned}
	\end{equation}
	
	\subsubsection{Example 2}
	If we choose $\alpha$ to be a constant and take $\psi\left(v,t\right)=0$, the time-derivatives of $\alpha$ disappear from Eq. \eqref{Eq:theta_ddot}, and we get
	\begin{equation} \label{Eq:theta_ddot_psi_Rv_alpha_const}
			\ddot{\theta} + \frac{2\dot{\gamma}+a_{0}\gamma}{\gamma}\dot{\theta} +\frac{\ddot{\gamma}+a_{0}\dot{\gamma}}{\gamma}\tilde{\theta}+\frac{P\alpha^{2}}{\gamma}\nabla_{\theta}f\left(\theta \right) =0
	\end{equation}
	
	\subsubsection{Example 3}
	Suppose that $\gamma\left(t\right)=\frac{c}{2}\alpha^{2}\left(t\right)$ for some constant $c>0$. In this setup, we have
	\begin{equation} \label{Eq:gamma_alpha_dot}
		\begin{aligned}
			\frac{\dot{\gamma}}{\gamma} &= \frac{2\dot{\alpha}}{\alpha}\\
			\frac{\ddot{\gamma}}{\gamma} &= 2\frac{\dot{\alpha}^{2}+\alpha\ddot{\alpha}}{\alpha^{2}}\\
		\end{aligned}
	\end{equation}
	Plugging Eq. \eqref{Eq:gamma_alpha_dot} into Eq. \eqref{Eq:theta_ddot} brings
	\begin{equation} \label{Eq:theta_ddot_alpha}
		\begin{aligned}		
			\ddot{\theta} + \left(a_{0}+\frac{3\dot{\alpha}}{\alpha}\right)\dot{\theta} + \frac{2\left(\ddot{\alpha}+a_{0}\dot{\alpha}\right)}{\alpha}\tilde{\theta} +\frac{2P}{c}\left(\nabla_{\theta}f\left(\theta \right)+\nabla_{v}\psi\left(v,t\right)\right) =0
		\end{aligned}
	\end{equation}
	Let $\psi\left(v,t\right)=0$ for simplicity and consider the limiting case of $a_{0} \rightarrow 0$. By choosing $\alpha\left(t\right) = kt$ with some nonzero constant $k$ and setting $c=2P$, Eq. \eqref{Eq:theta_ddot_alpha} reduces to
	\begin{equation} \label{Eq:theta_ddot_alpha_Nesterov}
		\ddot{\theta}\left(t\right) + \frac{3}{t}\dot{\theta}\left(t\right) + \nabla_{\theta}f\left(\theta\left(t\right)\right)=0
	\end{equation}
	which recovers the well-known result presented in \cite{Su2016,Suh2022} as the continuous-time limit of the Nesterov's accelerated gradient descent method. Since $\gamma\left(t\right) = Pk^{2}t^{2}$ in this case, Eq. \eqref{Eq:conv_rate} with $\theta_{c} = \theta_{*}$ certifies the convergence rate as
	\begin{equation} \label{Eq:conv_rate_Nesterov}
		f\left(\theta\left(t\right)\right) - f_{*} \leq \frac{E}{Pk^{2}t^{2}} \sim \mathcal{O}\left(\frac{1}{t^{2}}\right)
	\end{equation}
	
\begin{rem}
Design parameters of the accelerated gradient flow dynamics in Eq. \eqref{Eq:theta_ddot} should be chosen carefully to ensure realisability and robustness against noisy gradients. First, the coefficient of the third term in Eq. \eqref{Eq:theta_ddot} should be zero to make the algorithm implementable since $\tilde{\theta}$ cannot be measured. Second, establishing uniform asymptotic stability is advantageous to ensure robustness of the time-varying optimisation dynamics against disturbances \cite{Poveda2019,Poveda2020}.
\end{rem}
	
	\subsection{Particular Case: $m=0$}
	The particular case of $m=0$ corresponds to the case where the generator $\mathcal{G}$ is a static function, i.e., memoryless map, between the input-output pairs. In this special case, the system $y=\mathcal{G}\left[u\right]$ is defined to be passive if $u^{T}y \geq 0$, and the notion of storage function is no longer necessary. By understanding that Eq. \eqref{Eq:w_sys} becomes an algebraic relation of the form $w=u$, a passive map can be constructed naturally by defining the output as $y=w=u$ since $u^{T}y=u^{T}u\geq 0$. Without changing the definition of the signals and functions introduced to be compatible with the passivity of the input-output pair, the main conservation result given by Eqs. \eqref{Eq:consv} reduces to
	\begin{equation} \label{Eq:consv_m0}
		\begin{aligned}
			E &\equiv U\left(v\left(t_{0}\right),t_{0}\right)\\
			&=\gamma\left(t\right) \left[f\left(\theta\left(v\left(t\right),t\right)\right) - f\left(\theta_{c}\right)\right] + \psi\left(v\left(t\right),t\right)   + \int_{t_{0}}^{t}\dot{\gamma}\left(\tau\right)D_{f}\left(\theta_{c},\theta\left(v,\tau\right)\right)d\tau -\int_{t_{0}}^{t} \frac{\partial}{\partial \tau}\psi\left(v,\tau\right)d\tau  
		\end{aligned}
	\end{equation}
	which verifies
	\begin{align}
		E &\geq \gamma\left(t\right) \left[f\left(\theta\left(v\left(t\right),t\right)\right) - f\left(\theta_{c}\right)\right] + \underline{\psi}\\
		E &\geq \psi\left(v\left(t\right),t\right)
	\end{align}
	Also, the optimisation dynamics in $\theta$ can be expressed by combining Eqs. \eqref{Eq:v_dot}, \eqref{Eq:grad_U}, \eqref{Eq:theta_v_dot}, and the relation $y=u$ as
	\begin{equation} \label{Eq:theta_dot_m0}
		\dot{\theta} + \frac{\dot{\gamma}}{\gamma}\tilde{\theta} + \frac{\alpha^{2}}{\gamma}\left(\nabla_{\theta}f\left(\theta\right)+\nabla_{v}\psi\left(v,t\right)\right) = 0
	\end{equation}
	which recovers the simple gradient flow dynamics given by $\dot{\theta}+\nabla_{\theta}f\left(\theta\right)=0$ if $\gamma=\alpha^{2}$ is constant and $\psi\left(v,t\right)=0$.
	
	\section{Extension to Adaptive Control} \label{Sec:AC}
	Adaptive control deals with stable tracking of an uncertain dynamic system by means of online parameter update. Hence, the corresponding closed-loop dynamics generally consists of tracking error and parameter estimation error subsystems. If the uncertain part of the plant dynamics can be represented with a linear parametric model, parameter update for stable adaptive control can be interpreted as a convex optimisation process which involves interaction with an external system.
	
	Consider the common tracking error model of an adaptive control system given by
	\begin{equation} \label{Eq:e_dot}
		\dot{e}\left(t\right) = A_{m} e\left(t\right) + B_{p}\Phi\left(x_{p}\right)\tilde{\theta}\left(t\right)
	\end{equation} 
	where $A_{m}$ is a constant Hurwitz system matrix of the closed-loop plant dynamics, $B_{p}$ is the constant control effectiveness matrix of the plant, $\Phi$ represents the known basis function for the linear model of the plant uncertainty, and $x_{p}\left(t\right)$ denotes the plant state. It is assumed that the true uncertainty is represented with a constant true parameter $\theta_{*}=\theta_{c}$. Lemma \ref{Lem:Lyap} states that there exists $P_{e}=P_{e}^{T}>0$ verifying $A_{m}^{T}P_{e}+P_{e}A_{m}=-Q_{e}$ for any given $Q_{e}=Q_{e}^{T} > 0$ and the corresponding Lyapunov function is $V_{e} \triangleq \frac{1}{2}e^{T}P_{e}e$. The time-derivative of the Lyapunov function can be written as
	\begin{equation} \label{Eq:V_e_dot}
		\begin{aligned}
			\dot{V}_{e} &= -\frac{1}{2}e^{T}Q_{e}e + e^{T}P_{e}B_{p}\Phi\tilde{\theta}\\
			&= -\frac{1}{2}e^{T}Q_{e}e + \left<\tilde{\theta},\nabla_{\theta}\dot{V}_{e}\right>
		\end{aligned}
	\end{equation}
	One may notice that $\dot{V}_{e}$ alone is sign-indefinite.
	
	To design a parameter update algorithm based on accelerated gradient flow, it is necessary to allow the optimisation dynamics to have a port for interaction with exogenous signal. With this background, let us redefine the output of the generator $\mathcal{G}$ inside the optimiser by introducing a new variable $z$ to replace Eq. \eqref{Eq:v_dot} with 
	\begin{equation} \label{Eq:v_dot_z}
		\dot{v}\left(t\right)+z\left(t\right) = \alpha\left(t\right) y\left(t\right)
	\end{equation}
	while keeping the input definition unchanged as shown in Eq. \eqref{Eq:xu}. With this setup, Eq. \eqref{Eq:W_dot} can be rewritten as 
	\begin{equation} \label{Eq:W_dot_z}
		\begin{aligned}
			\dot{W}	&= \dot{V} - \left<y,u\right> = \dot{V} + \left<\dot{v}+z,\nabla_{v}U\left(v,t\right)\right>\\
			&= \dot{V} + \frac{d}{dt}U\left(v,t\right)-\frac{\partial}{\partial t}U\left(v,t\right)  + \left<z,\nabla_{v}U\left(v,t\right)\right>\\
			&= -\frac{1}{2}x^{T}Qx \\
			&\leq 0
		\end{aligned}
	\end{equation}
	Since we already have Eq. \eqref{Eq:part_U_t} that states $\frac{\partial}{\partial t} U\left(v,t\right) \leq 0$, the following inequality also holds
	\begin{equation} \label{Eq:W_dot_z_part_U}
		\begin{aligned}
			\dot{W} + \frac{\partial}{\partial t} U\left(v,t\right)  &= \dot{V} + \frac{d}{dt}U\left(v,t\right) + \left<z,\nabla_{v}U\left(v,t\right)\right>\\
			&=-\frac{1}{2}x^{T}Qx  -\dot{\gamma} D_{f}\left(\theta_{c},\theta \right) + \frac{\partial}{\partial t}\psi\left(v ,t\right) \\
			&\leq 0
		\end{aligned}
	\end{equation}
	In the followings, let $\psi\left(v,t\right)=0$ for simplicity, the choice which verifies $\nabla_{v}U\left(v,t\right) = \nabla_{\theta}f\left(\theta\right)$ according to Eq. \eqref{Eq:grad_U}. Then, by rearranging Eq. \eqref{Eq:W_dot_z_part_U}, we have
	\begin{equation} \label{Eq:V_U_dot}
		\begin{aligned}
			\dot{V} + \frac{d}{dt}U\left(v,t\right) &=-\frac{1}{2}x^{T}Qx  -\dot{\gamma} D_{f}\left(\theta_{c},\theta \right)-\left<z,\nabla_{\theta}f\left(\theta\right)\right>\\
			&\leq -\left<z,\nabla_{\theta}f\left(\theta\right)\right>
		\end{aligned}
	\end{equation}
	
	Now, let us define the Lyapunov function for the total closed-loop system by
	\begin{equation} \label{Eq:V_T_defn}
		V_{T} \triangleq V + U + V_{e}
	\end{equation}
	Also, let us define the additional variable as $z = \tilde{\theta} = \frac{v}{\gamma}$ and choose the objective function as $f\left(\theta\right) = \dot{V}_{e} + L\left(\theta\right)$ where $L$ is a loss function representing the goal of uncertainty learning which should be measurable. One example for the learning loss based on regressor extension is $L\left(\theta\right)=\frac{1}{2}\tilde{Y}^{T}\tilde{Y}=\frac{1}{2}\tilde{\theta}^{T}\Omega^{2}\tilde{\theta}$ where $\Omega=\mathcal{F}\left[\Phi^{T}\Phi\right]$ with some stable linear filter $\mathcal{F}$. Then, by using Eqs. \eqref{Eq:V_e_dot} and \eqref{Eq:V_U_dot}, the time-derivative of Eq. \eqref{Eq:V_T_defn} satisfies
	\begin{equation} \label{Eq:V_T_dot}
		\begin{aligned}
			\dot{V}_{T} &= -\frac{1}{2}x^{T}Qx   -\frac{1}{2}e^{T}Q_{e}e  -\dot{\gamma} D_{f}\left(\theta_{c},\theta \right) -\tilde{\theta}^{T}\Omega^{2}\tilde{\theta}\\
			&\leq 0
		\end{aligned}
	\end{equation}
	for $\forall t \geq t_{0}$. Integrating Eq. \eqref{Eq:V_T_dot} yields
	\begin{equation} \label{Eq:V_T_Ieq}
		\begin{aligned}
			E_{AC} &\equiv  V\left(t_{0}\right) + U\left(v\left(t_{0}\right),t_{0}\right) + V_{e}\left(t_{0}\right)\\
			&\geq V\left(t\right) + U\left(v\left(t\right),t\right) + V_{e}\left(t\right) \\
			&\geq U\left(v\left(t\right),t\right) \\
			&= \gamma\left(t\right)\left[f\left(\theta\left(t\right)\right) - f\left(\theta_{c}\right)\right] 
		\end{aligned}
	\end{equation}
	As $\gamma\left(t\right)>0$ for $\forall t \geq t_{0}$ by definition, Eq. \eqref{Eq:V_T_Ieq} ensures the convergence rate bound for the objective function value as
	\begin{equation} \label{Eq:conv_rate_AC}
		f\left(\theta\left(t\right)\right) - f\left(\theta_{c}\right) \leq \frac{E_{AC}}{\gamma\left(t\right)}
	\end{equation}
	
	If the plant controller is driven by an exogenous reference command under which the system experiences a sufficient but not necessarily persistent excitation, then we can introduce a mild assumption that $\Omega > 0$ will be satisfied after some finite time $t_{e}$. Therefore, assuming the presence of interval excitation, Eq. \eqref{Eq:V_T_dot} indicates the negative definiteness of $\dot{V}_{T}$ for $\forall t\geq t_{e}$. Consequentially, the convergence of $\left(x,e,\tilde{\theta}\right)$ towards $\left(0,0,0\right)$ can be established.

	\section{Conclusion} \label{Eq:Concls}
	This study developed a systematic method for constructing an accelerated algorithm in continuous-time domain to solve convex optimisation problems. The proposed method takes a control-theoretic approach which is to synthesise a passive dynamics as an internal generator with compatible choice of the input-output pair. The classical notion of passivity along with the Kalman-Yakubovich-Popov lemma plays a key role not only in the definition of output of the generator dynamics but also in the characterisation of the storage function. The convergence rate bound for the function value was established in relation to the time-varying factor that relates the integral of scaled output with the optimisation variable. As an example, the ordinary differential equation model for the Nesterov's accelerated gradient descent method is shown as a particular case. The passivity-based optimisation methodology has its significance as i) a theoretical bridge between optimisation and control and also as ii) a generic framework that can be utilised to design convex optimisation algorithms.

	\bibliographystyle{new-aiaa}
	\bibliography{KYP_Opt}

\end{document}